\ifdefined\XeTeXversion\else\pdfoutput=1\fi 
\documentclass[11pt]{article}

\usepackage[margin=1.1in]{geometry}
\usepackage{amsmath,amssymb,amsthm}
\usepackage{graphicx}
\usepackage{booktabs}
\usepackage{microtype}
\usepackage{natbib}
\usepackage{xcolor}
\usepackage[colorlinks=true,linkcolor=blue!60!black,citecolor=blue!60!black,urlcolor=blue!60!black]{hyperref}

\newtheorem{theorem}{Theorem}[section]
\newtheorem{proposition}[theorem]{Proposition}
\newtheorem{corollary}[theorem]{Corollary}

\theoremstyle{definition}
\newtheorem{assumption}[theorem]{Assumption}
\theoremstyle{remark}
\newtheorem{remark}[theorem]{Remark}

\DeclareMathOperator{\Var}{Var}
\DeclareMathOperator{\Cov}{Cov}
\DeclareMathOperator*{\esssup}{ess\,sup}
\newcommand{\E}{\mathbb{E}}
\newcommand{\Prob}{\mathbb{P}}
\newcommand{\ind}{\mathbf{1}}

\title{Partially Correlated Verifier Cascades in LLM Harnesses:\\
Concave Log-Odds, Polynomial Reliability, and Blind-Spot Ceilings}
\author{Jiangang Han\thanks{Independent researcher. \texttt{jiangangh@gmail.com}.}}
\date{July 2026}

\begin{document}
\maketitle

\begin{abstract}
Serial verification gates are a core reliability primitive in LLM harnesses: a candidate answer is returned only if $k$ verifier calls all accept it. Under conditionally independent gates, the recent \emph{Odds Law} \citep{aksu2026oddslaw} shows that posterior log-odds grow linearly in $k$, so failure decays exponentially; the same work states that ``a tight theory of partially correlated verifier cascades remains open.'' This note gives a minimal such theory. It carries the latent-variable/moment machinery developed for correlated \emph{voting} \citep{liu2026twocalls,liu2026voting} over to the structurally different \emph{conjunctive} verification primitive, where a survivorship effect with no one-shot-voting analogue --- errors that survive $j$ gates are exactly the high-$\alpha$ ones --- is the mechanism behind the concavity, the ceiling, and the trichotomy below. Modeling the per-instance false-accept rate of the verifier \emph{on the generator's own errors} as a latent variable $\alpha\sim G$ (de Finetti), the exact cascade posterior is $\ell_k=\ell_0-\ln m_k$, with $m_k$ the $k$-th moment of $G$, and: (i) $\ell_k$ is \emph{concave} in $k$ for every non-degenerate $G$ --- the Odds Law is its tangent at the first gate and an upper bound; (ii) for $\mathrm{Beta}(a,b)$ latents, failure decays \emph{polynomially}, $1-r_k\asymp k^{-b}$, not exponentially, with all formulas governed by a single correlation parameter $\rho_v=1/(a+b+1)$; (iii) a blind-spot atom of mass $1-\pi$ at $\alpha=1$ caps the total evidence extractable from any number of gates at $-\ln(1-\pi)$ nats, so reliability saturates strictly below $1$; (iv) letting the true-accept rate also vary across instances ($\beta\sim H$) yields a \emph{trichotomy} --- gates eventually always help, plateau, or actively harm --- decided by the upper-tail exponents $b_\alpha$ vs.\ $b_\beta$ of $G$ and $H$, with closed-form crossover $k^\dagger=\frac{a_\alpha b_\beta-a_\beta b_\alpha}{b_\alpha-b_\beta}$; mean gate quality $\bar\Lambda>1$ no longer guarantees that gating helps. Because everything is a functional of $G$, the theory is measurable: $R$ repeated verdicts per instance identify the first $R$ moments of $G$, so \emph{two} verdicts identify $\rho_v$; beta-binomial likelihood and NPMLE recover the full reliability curve and the (tail-dominated, ill-posed) ceiling. Synthetic-recovery experiments validate the estimators and the falsification loop: independence-based extrapolation underestimates the failure rate by $20\times$ at $k=5$ and $\approx3000\times$ at $k=10$ in a realistic regime, while the correlated theory fitted at order $R=8$ tracks held-out depths. The practical lever the theory isolates is \emph{decorrelation} --- changing model family, modality, or evidence source --- rather than adding gates.
\end{abstract}

\section{Introduction}\label{sec:intro}

An LLM harness improves the reliability of an unreliable base model by composing calls: decompose, ensemble, verify, recurse. The lineage of this program is von Neumann's synthesis of reliable organisms from unreliable components \citep{vonneumann1956}, and its most recent and most explicit algebraic form is the \emph{Odds Law} of \citet{aksu2026oddslaw}, with a companion orchestration harness \citep{aksu2026maestro}. For the verification primitive --- pass a candidate answer through $k$ accept/reject gates and return it only if all accept --- the Odds Law is sharp and simple: writing $\ell=\ln\frac{P(\text{correct})}{P(\text{wrong})}$ for log-odds and $\Lambda=\beta/\alpha$ for a gate's likelihood ratio (true-accept over false-accept rate), \emph{conditionally independent} gates each add a fixed evidence increment,
\begin{equation}\label{eq:oddslaw}
\ell_k=\ell_0+k\ln\Lambda ,
\end{equation}
so reliability $r_k=\sigma(\ell_k)$ ($\sigma$ the logistic function) approaches $1$ exponentially fast, and $k=O\!\big(\tfrac{\log(1/\delta)}{\log\Lambda}\big)$ gates suffice for reliability $1-\delta$ \citep[Thm.~5.1]{aksu2026oddslaw}. The same framework contains a threshold dichotomy at $\Lambda^\star=1$ \citep[Thm.~5.2]{aksu2026oddslaw} and a correlation-aware theory of the \emph{voting} primitive via a latent-factor model \citep[Thm.~7.2]{aksu2026oddslaw}.

The assumption doing the work in \eqref{eq:oddslaw} is conditional independence of gate errors. It fails in the situation harnesses actually face: verifiers built from the same model family, prompt style, or training distribution as the generator share \emph{blind spots} with it --- error types the generator likes to produce and the verifier reliably fails to catch. Empirically such blind spots are large: across 14 open models, an average $64.5\%$ of self-generated errors survive self-checking even though the same errors are caught when presented externally \citep{tsui2025selfcorrection}. The broader self-correction literature reaches the same verdict: prompted-LLM self-feedback rarely repairs reasoning errors and can even degrade accuracy, and reliable improvement generally requires \emph{external} feedback rather than more of the same model \citep{huang2024selfcorrect,kamoi2024selfcorrection}. \citet{aksu2026oddslaw} are explicit that this is the open boundary of their algebra: ``a tight theory of \emph{partially} correlated verifier cascades remains open.''

This note supplies a minimal such theory, together with a protocol for \emph{measuring} the correlation it introduces. Methodologically, the latent-variable/moment identification we use is the verification-side counterpart of the two-call framework recently developed for correlated \emph{voting} \citep{liu2026twocalls,liu2026voting}; what is specific to \emph{conjunctive} verification --- the survivorship tilt, and the phenomena it produces (concave depth-scaling, blind-spot ceiling, two-sided trichotomy, a zero-cost internal optimum) --- has no one-shot-voting analogue and is the new content here. Our contributions:

\begin{enumerate}\itemsep2pt
\item \textbf{Exact cascade posterior and concavification} (\S\ref{sec:onesided}). Treating the per-instance false-accept rate as a latent variable $\alpha\sim G$ --- exchangeability across gates plus de Finetti --- gives the exact posterior $\ell_k=\ell_0-\ln m_k$ with $m_k=\E[\alpha^k]$. Since $\ln m_k$ is a cumulant generating function, $\ell_k$ is \emph{concave} in $k$ for every non-degenerate $G$: the Odds Law is the degenerate ($\rho_v\to0$) case, coincides with the true curve only at the first gate, and upper-bounds it everywhere else. The mechanism is a survivorship effect specific to verification: errors that survive $j$ gates are exactly the high-$\alpha$ ones, so late gates face selected survivors.
\item \textbf{Polynomial reliability and a one-parameter family} (\S\ref{sec:onesided}). For $G=\mathrm{Beta}(a,b)$, $1-r_k\sim\kappa\,k^{-b}$: correlation degrades the exponential convergence of \eqref{eq:oddslaw} to polynomial. The single parameter $\rho_v=\Var(\alpha)/(\bar\alpha(1-\bar\alpha))=1/(a+b+1)$ --- the within-instance correlation of two verdicts --- interpolates continuously from the Odds Law ($\rho_v\to0$) to a pure blind-spot model ($\rho_v\to1$). The cost-optimal gate count becomes a power of the value/cost ratio instead of its logarithm.
\item \textbf{Blind-spot ceiling} (\S\ref{sec:onesided}). An atom of mass $1-\pi$ at $\alpha=1$ (errors the verifier never catches) caps the total extractable evidence of the entire cascade at $-\ln(1-\pi)$ nats: $r_\infty=p_0/(p_0+(1-p_0)(1-\pi))<1$ no matter how many gates. This is the verification-side dual of the correlated-voting floor (\citealp[Thm.~7.2]{aksu2026oddslaw}; \citealp{ladha1993}).
\item \textbf{Two-sided trichotomy with closed-form crossover} (\S\ref{sec:twosided}). If the true-accept rate also varies across instances ($\beta\sim H$), then $\ell_k=\ell_0+\ln m_k^{(\beta)}-\ln m_k^{(\alpha)}$ and asymptotically $\ell_k\approx\mathrm{const}+(b_\alpha-b_\beta)\ln k$: gates eventually always help, plateau, or actively harm according to whether the \emph{upper-tail exponent} of $G$ exceeds, equals, or falls below that of $H$. In the harmful regime the reliability peaks at a closed-form crossover $k^\dagger=\frac{a_\alpha b_\beta-a_\beta b_\alpha}{b_\alpha-b_\beta}$ and then decays to zero --- even when the mean likelihood ratio satisfies $\bar\Lambda>1$, so the independence-based dichotomy at $\Lambda^\star=1$ is no longer the right criterion under correlation.
\item \textbf{Identification and inversion of $\rho_v$} (\S\ref{sec:inversion}--\ref{sec:experiments}). All of the above are functionals of $G$, and $G$ is estimable from accept/reject logs alone: with $R$ repeated verdicts per instance on the generator's own errors, $X_i\sim\mathrm{Bin}(R,\alpha_i)$, unbiased U-statistics identify moments up to order $R$ --- so $R=2$ already identifies $\rho_v$. Beta-binomial likelihood recovers the reliability curve; nonparametric MLE recovers atoms. We quantify the intrinsic ill-posedness of the ceiling (an upper-tail functional, boundary resolution $\sim1/R$) and validate the full pipeline in synthetic-recovery experiments, including the falsification loop: fit at low order, predict held-out gate depths.
\end{enumerate}

\begin{figure}[t]
\centering
\includegraphics[width=.72\linewidth]{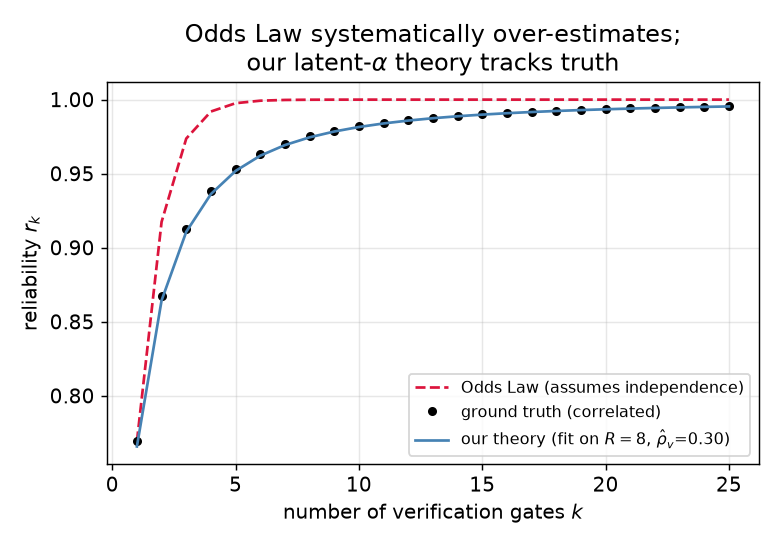}
\caption{The independence-based extrapolation (Odds Law, dashed) versus ground truth in a correlated synthetic world ($\bar\alpha=0.3$, $\rho_v=0.3$, $p_0=0.5$), and our theory fitted \emph{only} on accept counts of order $R=8$ ($\hat\rho_v=0.30$), extrapolated to $k=25$. Exponential extrapolation overestimates reliability; the latent-$\alpha$ theory tracks truth. See \S\ref{sec:experiments}, Experiment D.}
\label{fig:falsification}
\end{figure}

In one sentence, the correction this note makes to the Odds Law: \emph{correlation does not tax the first gate; it taxes the extrapolation.} And the practical lever it isolates: once $\rho_v$ is large, adding gates buys almost nothing --- reliability is bought by \emph{decorrelating} the verifier from the generator (different model family, different modality, external oracles, tool-based checks), which raises $b$, shrinks the blind-spot mass, and lifts the ceiling.

\section{Setup}\label{sec:setup}

\paragraph{Gates and survivor reliability.}
A candidate answer has truth label $C\in\{1,0\}$ with prior $p_0=\Prob(C{=}1)$, prior odds $o_0=p_0/(1-p_0)$, $\ell_0=\ln o_0$. It is passed through $k$ verification gates under \emph{all-accept} (conjunctive) gating: the answer is returned only if all $k$ gates accept. A gate has true-accept rate $\beta=\Prob(\text{accept}\mid C{=}1)$ and false-accept rate $\alpha=\Prob(\text{accept}\mid C{=}0,\text{instance})$. We study the \emph{survivor reliability}
\[
r_k \;=\; \Prob\big(C{=}1 \,\big|\, \text{all }k\text{ gates accept}\big),
\]
the precision of what the cascade returns. In the generate--verify--retry loop that real harnesses run (regenerate until some candidate passes), the returned answer is by construction a survivor, so $r_k$ is the end-to-end correctness of what the harness emits; rejected correct answers cost throughput, not precision. Section~\ref{sec:onesided} sets $\beta\equiv1$ (no false rejections) as an explicit idealization; \S\ref{sec:twosided} removes it.

\paragraph{Latent false-accept rate and the generator--verifier bridge.}
The central modeling step: $\alpha$ is not a constant of the verifier but a property of the \emph{instance}, distributed across instances as
\[
\alpha\sim G \quad\text{supported on }[0,1],
\]
where the instance population is \textbf{the generator's own erroneous outputs}. This bridge matters. $G$'s upper tail --- mass near $\alpha=1$ --- is precisely the set of errors the generator likes to make \emph{and} the verifier fails to catch: the generator--verifier blind-spot alignment. A verifier can be excellent on random errors and still have a heavy $G$-tail on its own generator's errors \citep{tsui2025selfcorrection}; empirically this tail thickens as the generator strengthens --- stronger generators produce errors that are systematically harder to detect \citep{zhou2025variation}. Consequently (\S\ref{sec:inversion}) any measurement of $G$ must use the generator's own errors as the test population, not synthetic or third-party errors.

\begin{assumption}[de Finetti idealization]\label{ass:definetti}
Given the instance (i.e., given $\alpha$), the $k$ gate verdicts are i.i.d.\ $\mathrm{Bernoulli}(\alpha)$ on an erroneous answer (resp.\ $\mathrm{Bernoulli}(\beta)$ on a correct one).
\end{assumption}

This is exact for exchangeable verdicts by de Finetti's theorem, and covers the two operational readings of ``$k$ gates'': $k$ samples of the same verifier at temperature $>0$ (then $\alpha$ is that verifier's per-instance acceptance propensity), or $k$ verifiers from a family sharing blind spots (then $\alpha$ is the family-level propensity). It is a mean-field idealization in the same spirit as von Neumann's constant component-failure probability \citep{vonneumann1956}: all shared structure between gates is compressed into a scalar latent, residual gate-specific correlations are ignored. Everything below is a first-order theory in this sense, and we flag it as the main relaxable assumption.

\section{One-sided theory: concavity, polynomial decay, ceiling}\label{sec:onesided}

Throughout this section $\beta\equiv1$. Write $m_k=\E_{\alpha\sim G}[\alpha^k]$ for the $k$-th moment of $G$, and $\bar\alpha=m_1$.

\begin{proposition}[Exact cascade posterior]\label{prop:exact}
Under Assumption~\ref{ass:definetti},
\begin{equation}\label{eq:exact}
\ell_k=\ell_0-\ln m_k,
\qquad
r_k=\frac{p_0}{p_0+(1-p_0)\,m_k},
\end{equation}
and $r_k$ is nondecreasing in $k$.
\end{proposition}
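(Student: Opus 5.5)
The plan is a direct Bayes computation followed by a moment monotonicity argument.

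First, compute the probability of the event $A_k=\{\text{all }k\text{ gates accept}\}$ under each truth label. On a correct answer, $\beta\equiv1$, so $\Prob(A_k\mid C{=}1)=1$. On an erroneous answer, condition on the latent $\alpha$. By Assumption~\ref{ass:definetti} the $k$ verdicts are i.i.d.\ $\mathrm{Bernoulli}(\alpha)$, so $\Prob(A_k\mid C{=}0,\alpha)=\alpha^k$. Integrating over $\alpha\sim G$, using the tower property on the population of the generator's erroneous outputs, gives $\Prob(A_k\mid C{=}0)=\E_G[\alpha^k]=m_k$.

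Second, apply Bayes' rule in odds form:
\[
\frac{\Prob(C{=}1\mid A_k)}{\Prob(C{=}0\mid A_k)}=o_0\cdot\frac{1}{m_k}.
\]
Taking logarithms yields $\ell_k=\ell_0-\ln m_k$. Converting back to a probability gives $r_k=p_0/(p_0+(1-p_0)m_k)$. One should note the degenerate case $m_k=0$, which happens only if $G=\delta_0$. In that case $r_k=1$ and $\ell_k=+\infty$, consistent with the convention $\ln0=-\infty$. Whenever $m_k>0$, the conditioning event $A_k$ has positive probability, so the posterior is well defined.

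Third, prove monotonicity. Since $\alpha\in[0,1]$, we have $\alpha^{k+1}\le\alpha^k$ pointwise. Taking expectations gives $m_{k+1}\le m_k$. Because $r_k$ is a decreasing function of $m_k$ (for $p_0\in(0,1)$), $r_k$ is nondecreasing in $k$.

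None of these steps is a real obstacle; the statement is essentially bookkeeping. The only delicate point is justifying the mixing step. There I would state explicitly that "instance" refers to a draw from the error population with law $G$, so the conditional independence given $\alpha$ integrates to the $k$-th moment rather than to $\bar\alpha^k$. That distinction is exactly what separates this result from the Odds Law \eqref{eq:oddslaw}, which is recovered when $G=\delta_{\bar\alpha}$.
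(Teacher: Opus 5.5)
Your proposal is correct and matches the paper's proof essentially step for step: $\Prob(\text{all accept}\mid C{=}1)=1$, mixing $\alpha^k$ over $G$ to get $m_k$, Bayes in odds form, and monotonicity from $\alpha^{k+1}\le\alpha^k$. The degenerate case $m_k=0$ is a harmless addition. One small correction there: the all-accept event always has probability at least $p_0>0$, so the posterior is well defined regardless of $m_k$; when $m_k=0$ only the odds become infinite.
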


\begin{proof}
Given $C{=}1$, all gates accept with probability $1$. Given $C{=}0$, the instance carries $\alpha\sim G$ and, conditionally, all $k$ accept with probability $\alpha^k$; marginally $\Prob(\text{all accept}\mid C{=}0)=\E[\alpha^k]=m_k$. Bayes in odds form gives $o_k=o_0/m_k$; take logs and invert. Monotonicity: $m_k$ is nonincreasing since $\alpha\le1$.
\end{proof}

\begin{corollary}[Odds Law as the degenerate case]\label{cor:oddslaw}
If $G=\delta_{\bar\alpha}$ (no correlation), $m_k=\bar\alpha^{\,k}$ and \eqref{eq:exact} reduces to $\ell_k=\ell_0+k\ln\Lambda$ with $\Lambda=1/\bar\alpha$: exactly \eqref{eq:oddslaw}.
\end{corollary}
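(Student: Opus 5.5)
The plan is a direct substitution into Proposition~\ref{prop:exact}, followed by checking that the resulting expression is exactly the form \eqref{eq:oddslaw}. Nothing beyond the exact posterior \eqref{eq:exact} is needed. The only content is computing the moments of a point mass and rewriting the logarithm.

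\emph{Step 1 (moments of the point mass).} For $G=\delta_{\bar\alpha}$, the latent $\alpha$ equals $\bar\alpha$ almost surely. Hence $m_k=\E[\alpha^k]=\bar\alpha^{\,k}$ for every $k\ge0$, and in particular $m_1=\bar\alpha$, consistent with the notation. This also shows the case is "no correlation": $\Var(\alpha)=0$, so $\rho_v=0$. Under Assumption~\ref{ass:definetti} the verdicts are then unconditionally i.i.d., which is exactly the Odds Law's hypothesis.

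\emph{Step 2 (substitute into \eqref{eq:exact}).} Plugging $m_k=\bar\alpha^{\,k}$ into $\ell_k=\ell_0-\ln m_k$ gives
\[
\ell_k=\ell_0-k\ln\bar\alpha=\ell_0+k\ln(1/\bar\alpha).
\]
Since Section~\ref{sec:onesided} fixes $\beta\equiv1$, a gate's likelihood ratio is $\Lambda=\beta/\alpha=1/\bar\alpha$. The expression therefore reads $\ell_k=\ell_0+k\ln\Lambda$, which is \eqref{eq:oddslaw}. The reliability formula likewise becomes $r_k=\sigma(\ell_0+k\ln\Lambda)$.

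\emph{Obstacle.} There is no real obstacle. The one point to check is the edge case $\bar\alpha=0$, where $\ln m_k=-\infty$ for $k\ge1$. Here $\Lambda=\infty$ and both sides equal $+\infty$ (with $r_k=1$), so the identity still holds in the extended sense; I would remark on this or assume $\bar\alpha\in(0,1]$.
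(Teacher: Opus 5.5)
Your proposal is correct and matches the paper's argument: the corollary is the one-line substitution $m_k=\bar\alpha^{\,k}$ into $\ell_k=\ell_0-\ln m_k$, combined with $\Lambda=\beta/\alpha=1/\bar\alpha$ under $\beta\equiv1$. Your remark on the $\bar\alpha=0$ edge case is a harmless extra. One small caveat: $\rho_v=0$ is literally $0/0$ when $\bar\alpha\in\{0,1\}$, but your argument does not rely on that.
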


\begin{theorem}[Concavification]\label{thm:concave}
For any $G$, $k\mapsto\ell_k$ is concave on $k\ge0$; strictly concave unless $G$ is degenerate. Moreover the per-gate evidence increments
\[
\Delta_{j+1}:=\ell_{j+1}-\ell_j=\ln\frac{m_j}{m_{j+1}}=-\ln \E_{(j)}[\alpha],
\qquad
d G_{(j)}\propto \alpha^{j}\,dG,
\]
are strictly decreasing, with $\E_{(j)}[\alpha]\uparrow\esssup\alpha$. Consequently the Odds Law line through the first gate, $\ell_0+k\Delta_1$, upper-bounds $\ell_k$ for all $k\ge1$, with equality only at $k\in\{0,1\}$.
\end{theorem}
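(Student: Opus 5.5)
By Proposition~\ref{prop:exact}, $\ell_k=\ell_0-\ln m_k$. Concavity of $\ell_k$ is therefore the same as convexity of $\psi(k):=\ln m_k=\ln\E[e^{k\ln\alpha}]$, which is the cumulant generating function of the random variable $\ln\alpha$ evaluated at $k$. I will work with real $k\ge0$; the integer sequence then inherits convexity.

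\emph{Step 1: atoms at zero.} If $G$ has an atom at $\alpha=0$, then $\ln\alpha=-\infty$ there. I will handle this directly: those instances contribute $\alpha^k=0$ for every $k>0$. So I work with the restriction of $G$ to $(0,1]$. The only boundary effect is at $k=0$, where $m_0=1$. For $k>0$ that atom merely rescales $m_k$ by its constant mass, so it shifts $\psi$ by a constant. On the segment from $k=0$ it lowers $\psi(0^+)$ relative to $\psi(0)$, and that jump still preserves convexity. This is the one technical point I expect to need care; the alternative is to assume $G(\{0\})=0$ and treat the atom separately.

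\emph{Step 2: convexity of $\psi$.} For $0\le s<t$ and $\lambda\in(0,1)$, Hölder's inequality with exponents $1/\lambda$ and $1/(1-\lambda)$ gives
\[
m_{\lambda s+(1-\lambda)t}=\E\big[\alpha^{\lambda s}\alpha^{(1-\lambda)t}\big]\le m_s^{\lambda}m_t^{1-\lambda}.
\]
Equality in Hölder requires $\alpha^s$ and $\alpha^t$ to be proportional $G$-a.s. That forces $\alpha$ to be a.s.\ constant on $\{\alpha>0\}$, so strictness holds unless $G$ is degenerate. (A two-point law $\{0,c\}$ also gives equality for $s,t>0$, but it fails at $k=0$ by Step~1.)

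\emph{Step 3: the increments.} For integers $j$, convexity of $\psi$ is equivalent to the increments $\psi(j+1)-\psi(j)$ being nondecreasing. I will also verify this directly, since it exposes the survivorship mechanism. We have $\Delta_{j+1}=\ln(m_j/m_{j+1})=-\ln\E_{(j)}[\alpha]$, where the tilted law is $dG_{(j)}=\alpha^j\,dG/m_j$. One checks that $G_{(j+1)}$ is the size-biased version of $G_{(j)}$. Hence
\[
\E_{(j+1)}[\alpha]-\E_{(j)}[\alpha]=\frac{\Var_{(j)}(\alpha)}{\E_{(j)}[\alpha]}\ge0,
\]
which is strict whenever $G_{(j)}$ is non-degenerate. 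For $j\ge1$, $G_{(j)}$ is non-degenerate exactly when $G$ restricted to $(0,1]$ is non-degenerate. So $\Delta_j$ is strictly decreasing.

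For the limit $\E_{(j)}[\alpha]=m_{j+1}/m_j\uparrow\esssup\alpha=:M$, I will use the standard fact $m_j^{1/j}\to M$. Since the ratios $m_{j+1}/m_j$ are monotone and bounded by $M$, their limit $L$ satisfies $L\le M$. The ratios and the $j$-th roots share the same limit, because $\ln m_j=\sum_{i<j}\ln(m_{i+1}/m_i)$ is a Cesàro sum. This gives $L=M$.

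\emph{Step 4: the tangent bound.} Concavity alone yields $\ell_k=\ell_0+\sum_{i\le k}\Delta_i\le\ell_0+k\Delta_1$. Equality holds at $k=0$ and $k=1$, and the inequality is strict for $k\ge2$ because $\Delta_2<\Delta_1$.
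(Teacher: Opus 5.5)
This is essentially the paper's proof, written out in more detail: convexity of the cumulant generating function $k\mapsto\ln m_k$, the tilted laws $G_{(j)}$, concentration of the tilt at $\esssup\alpha$, and the tangent bound from concavity. Your additions are correct and make rigorous what the paper only asserts:
\begin{itemize}
\item H\"older's inequality for convexity;
\item the size-bias identity $\E_{(j+1)}[\alpha]-\E_{(j)}[\alpha]=\Var_{(j)}(\alpha)/\E_{(j)}[\alpha]$;
\item the Ces\`aro argument equating $\lim_j m_{j+1}/m_j$ with $\lim_j m_j^{1/j}=\esssup\alpha$.
\end{itemize}

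You should retract the parenthetical in Step~2. Take $G=q\,\delta_0+(1-q)\,\delta_c$ with $q,c\in(0,1)$. H\"older is then an equality for every $0<s<t$, so $\ln m_k=\ln(1-q)+k\ln c$ is affine on $(0,\infty)$. The fact that equality fails when $s=0$ does not rescue strict convexity.

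On the integers the same example gives $\Delta_1=-\ln\big((1-q)c\big)>\Delta_2=\Delta_3=\cdots=-\ln c$. So the increments are not strictly decreasing, even though $G$ is non-degenerate. This is a genuine exception to the statement's strictness clauses, and the paper's criterion ``$\ln\alpha$ a.s.\ constant'' misses it as well.

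Your own Step~3 already identifies the correct hypothesis: all increments are strictly decreasing if and only if $G$ restricted to $(0,1]$ is non-degenerate. The strict tangent bound in Step~4 is unaffected, because it needs only $\Delta_2<\Delta_1$, i.e.\ $\Var(\alpha)>0$.
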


\begin{proof}
$\ln m_k=\ln\E[e^{k\ln\alpha}]$ is the cumulant generating function of $\ln\alpha$ evaluated at $k$, hence convex (strictly, unless $\ln\alpha$ is a.s.\ constant); $\ell_k=\ell_0-\ln m_k$ is concave. The increment identity is algebra; $G_{(j)}$ is the $\alpha^j$-tilted (size-biased) distribution, which concentrates on the essential supremum as $j\to\infty$, so $\Delta_{j+1}\downarrow -\ln\esssup\alpha$ (zero when $\esssup\alpha=1$). The tangent bound is concavity.
\end{proof}

\begin{remark}[Survivorship mechanism]\label{rem:survivor}
The tilt $dG_{(j)}\propto\alpha^j dG$ \emph{is} the population of errors still alive after $j$ gates: surviving errors are precisely the ones selected for fooling the verifier. Late gates face survivors, not fresh errors --- which is why their evidence decays to zero. This selection effect is specific to conjunctive verification; it has no analogue in one-shot voting, where all votes face the same instance.
\end{remark}

\begin{theorem}[Exponential $\to$ polynomial]\label{thm:poly}
Suppose $G$ has no atom at $1$ and density $g(\alpha)\sim c\,(1-\alpha)^{b-1}$ as $\alpha\uparrow1$ for some $b,c>0$. Then
\[
m_k\;\sim\; c\,\Gamma(b)\,k^{-b},
\qquad
1-r_k\;\sim\;\kappa\,k^{-b},
\qquad
\kappa=\tfrac{1-p_0}{p_0}\,c\,\Gamma(b).
\]
In particular for $G=\mathrm{Beta}(a,b)$: $m_k=\frac{(a)_k}{(a+b)_k}=\prod_{j=0}^{k-1}\frac{a+j}{a+b+j}$ exactly, and $c\,\Gamma(b)=\frac{\Gamma(a+b)}{\Gamma(a)}$.
\end{theorem}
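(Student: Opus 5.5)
The plan is to treat $m_k=\int_0^1\alpha^k\,dG(\alpha)$ as a Laplace-type integral concentrated near $\alpha=1$, extract its asymptotics by an Abelian (Watson's lemma) argument, and then transfer the result to $1-r_k$ through the exact posterior of Proposition~\ref{prop:exact}.

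\textbf{Step 1: localize the moment integral.} Fix $\delta\in(0,1)$ and split $m_k=\int_{[0,1-\delta]}\alpha^k\,dG+\int_{(1-\delta,1)}\alpha^k g(\alpha)\,d\alpha$. Here we use that $G$ has no atom at $1$, and that near $1$ it has the density $g$; we may shrink $\delta$ so $G$ is absolutely continuous on $(1-\delta,1)$. The first piece is at most $(1-\delta)^k$, which is exponentially small and hence $o(k^{-b})$.

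\textbf{Step 2: Watson's lemma.} In the second piece substitute $\alpha=e^{-t}$, so that $\alpha^k=e^{-kt}$ and $t\sim 1-\alpha$ as $t\downarrow0$. The integrand becomes $e^{-kt}\,g(e^{-t})e^{-t}\,dt$, and $g(e^{-t})e^{-t}\sim c\,t^{b-1}$. Given $\varepsilon>0$, choose $\delta$ so that this function lies between $(1\pm\varepsilon)c\,t^{b-1}$ on the relevant interval $(0,t_\delta)$. Then compare with
\[
\int_0^{t_\delta}e^{-kt}t^{b-1}\,dt=\Gamma(b)k^{-b}-O(e^{-kt_\delta/2}).
\]
This gives $(1-\varepsilon)c\Gamma(b)\le\liminf k^bm_k\le\limsup k^bm_k\le(1+\varepsilon)c\Gamma(b)$. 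Letting $\varepsilon\to0$ yields $m_k\sim c\,\Gamma(b)k^{-b}$. This squeeze is the only step needing care, and it is routine. The one subtlety is that the hypothesis is only an asymptotic equivalence of the density, not a bound, which is exactly why the uniform $\varepsilon$-sandwich on a shrinking neighbourhood is needed.

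\textbf{Step 3: transfer to reliability.} By Proposition~\ref{prop:exact},
\[
1-r_k=\frac{(1-p_0)m_k}{p_0+(1-p_0)m_k}.
\]
Since $m_k\to0$, this is $\frac{1-p_0}{p_0}m_k(1+o(1))$, which gives $\kappa=\frac{1-p_0}{p_0}c\Gamma(b)$.

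\textbf{Step 4: the Beta case.} For the exact moments, compute $m_k=B(a+k,b)/B(a,b)=\Gamma(a+k)\Gamma(a+b)/(\Gamma(a)\Gamma(a+b+k))$, which equals $(a)_k/(a+b)_k$. For the constant, the Beta density satisfies $g(\alpha)\sim\frac{1}{B(a,b)}(1-\alpha)^{b-1}$, so $c\Gamma(b)=\Gamma(a+b)/\Gamma(a)$. As a cross-check, Gautschi's estimate $\Gamma(a+k)/\Gamma(a+b+k)\sim k^{-b}$ recovers the same asymptotic directly.
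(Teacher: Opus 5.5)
Your proof is correct and follows essentially the same route as the paper's appendix proof: localize $m_k$ near $\alpha=1$, apply Watson's lemma, transfer through the exact posterior of Proposition~\ref{prop:exact}, and handle the Beta case via $B(a+k,b)/B(a,b)$ and a gamma-ratio asymptotic. Your substitution $\alpha=e^{-t}$ makes $\alpha^k=e^{-kt}$ exact, and your explicit $\varepsilon$-sandwich spells out rigorously what the paper compresses into ``$(1-s)^k\sim e^{-ks}$ on the scale $s=O(1/k)$.''
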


Proof in Appendix~\ref{app:poly}. The contrast with \eqref{eq:oddslaw} is the headline: under independence $1-r_k\sim\frac{1-p_0}{p_0}\bar\alpha^{\,k}$ decays \emph{exponentially}; any latent heterogeneity with a regularly-varying upper tail degrades this to \emph{polynomial} $k^{-b}$, where $b$ measures how thin the blind-spot tail is. Only the tail exponent matters asymptotically; the Beta family adds exact finite-$k$ formulas.

\begin{theorem}[Blind-spot ceiling]\label{thm:ceiling}
Let $G=\pi\,\mathrm{Beta}(a,b)+(1-\pi)\,\delta_1$ with blind-spot mass $1-\pi\in(0,1)$. Then $m_k\downarrow 1-\pi$ and
\[
\sup_k\,(\ell_k-\ell_0)=-\ln(1-\pi),
\qquad
r_\infty=\frac{p_0}{p_0+(1-p_0)(1-\pi)}<1 .
\]
\end{theorem}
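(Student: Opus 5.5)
The argument runs through the moment sequence of the mixture and then applies Proposition~\ref{prop:exact}. By linearity of expectation over the mixture,
\[
m_k=\pi\,\E_{\mathrm{Beta}(a,b)}[\alpha^k]+(1-\pi)\cdot 1^k=\pi\,\frac{(a)_k}{(a+b)_k}+(1-\pi),
\]
where the Beta moment is the exact product formula from Theorem~\ref{thm:poly}.

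\textbf{Monotone limit of the moments.} The factor $\prod_{j=0}^{k-1}\frac{a+j}{a+b+j}$ is strictly decreasing in $k$, since each factor is $<1$ because $b>0$. It tends to $0$. This follows from Theorem~\ref{thm:poly}, which gives it asymptotically as $\frac{\Gamma(a+b)}{\Gamma(a)}k^{-b}$, or directly from dominated convergence, since $\alpha^k\to0$ for Beta-almost every $\alpha<1$. Hence $m_k\downarrow 1-\pi$, strictly, and never attains the limit for finite $k$.

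\textbf{Supremum of the evidence.} By Proposition~\ref{prop:exact}, $\ell_k-\ell_0=-\ln m_k$. Since $m_k$ decreases strictly to $1-\pi>0$, $-\ln m_k$ increases strictly to $-\ln(1-\pi)$. The supremum over $k$ therefore equals this limit and is not attained. It is finite precisely because $\pi<1$. This is where the atom does the work: the Beta part alone would give $-\ln m_k\to\infty$, as in Theorem~\ref{thm:poly}.

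\textbf{Ceiling on reliability.} Substituting the limit into $r_k=p_0/(p_0+(1-p_0)m_k)$ and using continuity gives the stated $r_\infty$. Since $p_0<1$ and $1-\pi>0$, the denominator exceeds $p_0$, so $r_\infty<1$. I assume the standing nondegeneracy $p_0\in(0,1)$. There is no real obstacle here; the only care needed is strictness of the monotonicity and noting that the supremum is a limit rather than a maximum. As a consistency check, the same conclusion follows from Theorem~\ref{thm:concave}, since $\esssup\alpha=1$ forces the increments $\Delta_{j}\downarrow0$, and they sum to $-\ln(1-\pi)$.
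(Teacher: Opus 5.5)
Your proof is correct and takes the paper's route: split $m_k=\pi\,m_k^{\mathrm{Beta}}+(1-\pi)$, let the Beta moment vanish (the paper cites the polynomial-decay theorem, and your dominated-convergence alternative works equally well), then substitute into $\ell_k=\ell_0-\ln m_k$ and the formula for $r_k$. Your additions are details the paper leaves implicit: strictness of the decrease, the supremum being an unattained limit, and the observation that $r_\infty<1$ needs $p_0<1$ as well as $\pi<1$.
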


\begin{proof}
$m_k=\pi\,m_k^{\mathrm{Beta}}+(1-\pi)\to1-\pi$ by Theorem~\ref{thm:poly}; plug into \eqref{eq:exact}.
\end{proof}

The entire cascade --- any number of gates from the same correlated family --- carries a finite \emph{evidence budget} of $-\ln(1-\pi)$ nats, set by the blind-spot mass alone, not by $\Lambda$ or $k$. This is the verification-side dual of the correlated-voting floor ($n_{\mathrm{eff}}=1/\gamma$, majority-error floor $\Prob[p(S)<1/2]$) of \citet[Thm.~7.2]{aksu2026oddslaw} and, classically, of correlated-jury theorems \citep{ladha1993}. On the voting side this ceiling is by now also an empirical fact: a panel of nine frontier judges from seven model families supplies only about two independent votes' worth of information, and neither more judges nor smarter aggregation closes the gap \citep{kohli2026ninejudges}.

\begin{proposition}[One correlation parameter]\label{prop:rho}
For two gate verdicts $\ind_1,\ind_2$ on an erroneous instance,
\[
\rho_v:=\mathrm{Corr}(\ind_1,\ind_2)=\frac{\Var(\alpha)}{\bar\alpha(1-\bar\alpha)}
\;\overset{\mathrm{Beta}(a,b)}{=}\;\frac{1}{a+b+1}.
\]
$\rho_v\to0$ (with $\bar\alpha$ fixed) recovers the Odds Law; $\rho_v\to1$ recovers a two-point blind-spot model ($\alpha\in\{0,1\}$), where gates either succeed immediately or never.
\end{proposition}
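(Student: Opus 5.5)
The plan is a direct moment computation under Assumption~\ref{ass:definetti}, followed by the Beta specialization and the two limits. Conditionally on $\alpha$, the verdicts $\ind_1,\ind_2$ are i.i.d.\ $\mathrm{Bernoulli}(\alpha)$. Taking expectations over $\alpha\sim G$ gives $\E[\ind_i]=\bar\alpha$ and $\E[\ind_1\ind_2]=\E[\alpha^2]=m_2$. Since each verdict is Bernoulli, $\Var(\ind_i)=\bar\alpha(1-\bar\alpha)$. By the law of total covariance,
\[
\Cov(\ind_1,\ind_2)=\E\bigl[\Cov(\ind_1,\ind_2\mid\alpha)\bigr]+\Cov\bigl(\E[\ind_1\mid\alpha],\E[\ind_2\mid\alpha]\bigr)=0+\Var(\alpha).
\]
Dividing by $\bar\alpha(1-\bar\alpha)$ gives the first equality. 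This requires $\bar\alpha\in(0,1)$, which I will state as the standing nondegeneracy condition.

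For the Beta case, I will use the standard moments $\bar\alpha=a/(a+b)$ and $\Var(\alpha)=ab/\bigl((a+b)^2(a+b+1)\bigr)$. These are also available from Theorem~\ref{thm:poly}'s exact formula $m_2=\frac{a(a+1)}{(a+b)(a+b+1)}$. Dividing by $\bar\alpha(1-\bar\alpha)=ab/(a+b)^2$ gives $1/(a+b+1)$.

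For the limits, I fix $\bar\alpha$ and let $\rho_v\to0$. Then $\Var(\alpha)\to0$, so $G\to\delta_{\bar\alpha}$ weakly, since $\alpha$ is bounded and Chebyshev applies. Because $\alpha^k$ is bounded and continuous on $[0,1]$, $m_k\to\bar\alpha^{\,k}$ for each fixed $k$. Corollary~\ref{cor:oddslaw} and \eqref{eq:exact} then give convergence to the Odds Law at every fixed depth.

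For $\rho_v\to1$, I use the key inequality $\Var(\alpha)=\E[\alpha^2]-\bar\alpha^2\le\E[\alpha]-\bar\alpha^2=\bar\alpha(1-\bar\alpha)$. This holds because $\alpha^2\le\alpha$ on $[0,1]$, with equality iff $\alpha\in\{0,1\}$ a.s. Hence $\rho_v\le1$. Moreover $1-\rho_v=\E[\alpha(1-\alpha)]/\bigl(\bar\alpha(1-\bar\alpha)\bigr)$, so $\rho_v\to1$ forces $\E[\alpha(1-\alpha)]\to0$. This means $G$ concentrates weakly on $\{0,1\}$ with mass $\bar\alpha$ at $1$, i.e.\ $G\to(1-\bar\alpha)\delta_0+\bar\alpha\delta_1$. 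In that limit $m_k=\bar\alpha$ for all $k\ge1$, so the first gate extracts all the evidence $-\ln\bar\alpha$ and subsequent gates extract none. This is the ``succeed immediately or never'' statement, and it is the $\pi=1-\bar\alpha$ ceiling of Theorem~\ref{thm:ceiling} with the Beta component collapsed to $\delta_0$.

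The only step needing care is making the limiting statements precise. Convergence must be of $G$ in distribution and of $m_k$ for each fixed $k$, not uniformly in $k$: for $\rho_v$ small but positive, Theorem~\ref{thm:concave} still eventually bends the curve away from the line. I will phrase the limits pointwise in $k$ to avoid overclaiming.
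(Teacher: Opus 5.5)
Your proposal is correct and matches the paper's proof of the identity. The paper likewise computes $\E[\ind_1\ind_2]=m_2$ and $\E[\ind_i]=\bar\alpha$, obtains $\Cov(\ind_1,\ind_2)=\Var(\alpha)$ and $\Var(\ind_i)=\bar\alpha(1-\bar\alpha)$, and then plugs in the standard Beta variance; your law-of-total-covariance phrasing is the same computation. The paper gives no argument for the two limiting claims, so your additions tighten the statement rather than depart from it: the Chebyshev/weak-convergence argument for $\rho_v\to0$, the identity $1-\rho_v=\E[\alpha(1-\alpha)]/(\bar\alpha(1-\bar\alpha))$ for $\rho_v\to1$, and the caveat that both limits hold only pointwise in $k$.
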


\begin{proof}
$\E[\ind_1\ind_2]=\E[\alpha^2]=m_2$, $\E[\ind_i]=\bar\alpha$, so $\Cov=m_2-\bar\alpha^2=\Var(\alpha)$ and $\Var(\ind_i)=\bar\alpha(1-\bar\alpha)$. For Beta, $\Var(\alpha)=\frac{ab}{(a+b)^2(a+b+1)}$.
\end{proof}

$\rho_v$ is the same intraclass-correlation functional that governs the voting side \citep{ladha1993,aksu2026oddslaw}, now appearing on the verification side, and --- unlike a modeling parameter --- it is directly measurable (\S\ref{sec:inversion}).

\begin{corollary}[Cost-optimal gate count]\label{cor:kstar}
With per-gate cost $c$, success value $U$, and objective $J(k)=U r_k-ck$, the optimum under Theorem~\ref{thm:poly} scales as
\[
k^{*}\approx\Big(\frac{U\kappa b}{c}\Big)^{\!1/(b+1)}
\quad\text{(power law)},
\qquad\text{vs.}\qquad
k^{*}_{\mathrm{indep}}\sim\frac{\ln(U/c)}{\ln(1/\bar\alpha)}
\quad\text{(logarithmic)}.
\]
\end{corollary}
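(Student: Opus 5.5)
Treat $k$ as continuous and substitute the asymptotic form of Theorem~\ref{thm:poly} into $J$. Since $\beta\equiv1$ and $r_k=p_0/(p_0+(1-p_0)m_k)$ by Proposition~\ref{prop:exact}, we have $1-r_k=(1-p_0)m_k/(p_0+(1-p_0)m_k)$. For large $k$ this gives $1-r_k\sim\kappa k^{-b}$. Hence, up to a constant,
\[
J(k)\approx U-U\kappa k^{-b}-ck .
\]
This surrogate is strictly concave in $k>0$, so its maximizer is characterized by the first-order condition
\[
U\kappa b\,k^{-b-1}=c,
\]
which yields $k^*=(U\kappa b/c)^{1/(b+1)}$, a power of $U/c$.

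For the independent comparison, use Corollary~\ref{cor:oddslaw}. There $1-r_k\sim\frac{1-p_0}{p_0}\bar\alpha^{\,k}$, and the first-order condition becomes
\[
U\tfrac{1-p_0}{p_0}\ln(1/\bar\alpha)\,\bar\alpha^{\,k}=c .
\]
Solving gives $k^*_{\mathrm{indep}}=\ln\!\big(U\tfrac{1-p_0}{p_0}\ln(1/\bar\alpha)/c\big)/\ln(1/\bar\alpha)$, which is $\sim\ln(U/c)/\ln(1/\bar\alpha)$ as $U/c\to\infty$.

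The main obstacle is justifying that optimizing the asymptotic surrogate gives the true optimizer to leading order. I would handle this in the regime $U/c\to\infty$. First show that the true maximizer $k^*$ diverges: for any fixed $K$ the marginal gain $U(r_{K+1}-r_K)$ exceeds $c$ eventually, because $r_k$ is strictly increasing whenever $G$ has no atom at $1$ (Theorem~\ref{thm:concave} gives strictly positive increments). Next, the increments are asymptotically regular. In the Beta case this is exact, since $m_k-m_{k+1}=m_k\,\frac{b}{a+b+k}\sim b\,c\,\Gamma(b)k^{-b-1}$. For a general regularly varying tail, use $m_k-m_{k+1}=\E[\alpha^k(1-\alpha)]$, whose Laplace-type asymptotics give $c\,\Gamma(b+1)k^{-b-1}$ by the same computation as in Appendix~\ref{app:poly}. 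The marginal gain therefore satisfies
\[
U(r_{k+1}-r_k)\sim U\kappa b\,k^{-b-1},
\]
and it is eventually decreasing. The discrete optimum is the first $k$ at which this gain drops below $c$, so $k^*\sim(U\kappa b/c)^{1/(b+1)}$.

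The ``$\approx$'' in the statement should be read as asymptotic equivalence as $U/c\to\infty$, and I would state it that way.
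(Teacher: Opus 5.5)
Your proposal is correct and takes the same route as the paper: substitute $1-r_k\approx\kappa k^{-b}$ (resp.\ $\frac{1-p_0}{p_0}\bar\alpha^{k}$), set $U$ times the marginal gain equal to $c$, and solve. Your extra step supplies the justification the paper skips when it simply differentiates an asymptotic equivalence: you show the maximizer diverges as $U/c\to\infty$ and compute the discrete increment directly as $m_k-m_{k+1}=\E[\alpha^k(1-\alpha)]\sim c\,\Gamma(b+1)k^{-b-1}$. Your unproved claim that the gain is ``eventually decreasing'' is not actually needed, because $J(k^*)\ge J(k^*\pm1)$ already gives $U(r_{k^*+1}-r_{k^*})\le c\le U(r_{k^*}-r_{k^*-1})$, which pins down $k^*$.
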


Proof in Appendix~\ref{app:kstar}. Correlation is a double penalty: each gate buys less, and reaching a target reliability requires polynomially rather than logarithmically many gates --- until the ceiling makes the target unreachable altogether.

\paragraph{How large is the error of assuming independence?}
Table~\ref{tab:overestimate} evaluates \eqref{eq:exact} in a moderate regime: $p_0=0.5$, $\bar\alpha=0.3$ (a decent verifier: catches $70\%$ of errors per gate), $\rho_v=0.3$ ($a=0.7$, $b\approx1.63$), against the Odds Law with the same $\bar\alpha$. The curves agree at $k=1$ by construction and then split: by $k=5$ the Odds Law claims near-perfection ($99.8\%$) while the truth is $95.3\%$ --- a $20\times$ underestimate of the failure rate; by $k=10$, $3000\times$. An operator budgeting gates by the independence formula believes they bought five nines; they bought $98\%$.

\begin{table}[t]
\centering
\caption{Independence-based extrapolation vs.\ correlated truth ($p_0=0.5$, $\bar\alpha=0.3$, $\rho_v=0.3$).}
\label{tab:overestimate}
\begin{tabular}{r cc cc c}
\toprule
$k$ & $r_k$ (indep.) & $r_k$ (true) & $1-r_k$ (indep.) & $1-r_k$ (true) & failure underest.\\
\midrule
1  & 0.769 & 0.769 & 0.231 & 0.231 & $1\times$\\
2  & 0.917 & 0.867 & 0.083 & 0.133 & $1.6\times$\\
3  & 0.974 & 0.913 & 0.026 & 0.087 & $3.3\times$\\
5  & 0.998 & 0.953 & $2.4\!\times\!10^{-3}$ & 0.047 & $20\times$\\
10 & 0.999994 & 0.982 & $5.9\!\times\!10^{-6}$ & 0.018 & $\approx3000\times$\\
\bottomrule
\end{tabular}
\end{table}

\section{Two-sided theory: when gates help, plateau, or harm}\label{sec:twosided}

Real verifiers also \emph{falsely reject}: some correct answers --- valid but unidiomatic code, unusual phrasings --- are systematically refused. Let the per-instance true-accept rate be latent too, $\beta\sim H$ on the population of the generator's \emph{correct} outputs, with moments $m_k^{(\beta)}=\E[\beta^k]$ (and, for symmetry, write $m_k^{(\alpha)}\equiv m_k$ for the $\alpha$-side moments of \S\ref{sec:onesided}); keep Assumption~\ref{ass:definetti} on both sides. The same Bayes computation gives
\begin{equation}\label{eq:twosided}
\ell_k=\ell_0+\ln m_k^{(\beta)}-\ln m_k^{(\alpha)} :
\end{equation}
a race between the (concave, saturating) benefit of filtering errors and the accumulating cost of killing correct answers. Section~\ref{sec:onesided} is the special case $H=\delta_1$.

\begin{theorem}[Trichotomy]\label{thm:trichotomy}
Let $G$ and $H$ have no atoms at $1$ and regularly-varying upper tails with exponents $b_\alpha$ and $b_\beta$ respectively (densities $\sim c_\alpha(1-x)^{b_\alpha-1}$, $c_\beta(1-x)^{b_\beta-1}$ at $x\uparrow1$). Then
\[
\ell_k=\ell_0+\ln\frac{c_\beta\,\Gamma(b_\beta)}{c_\alpha\,\Gamma(b_\alpha)}+(b_\alpha-b_\beta)\ln k+o(1),
\qquad k\to\infty,
\]
so exactly one of three regimes obtains:
\begin{center}
\begin{tabular}{lll}
\emph{(i) gates eventually always help:} & $b_\alpha>b_\beta$ & $\ell_k\to+\infty$, \ $1-r_k\asymp k^{-(b_\alpha-b_\beta)}$;\\
\emph{(ii) plateau:} & $b_\alpha=b_\beta$ & $r_k\to\sigma\big(\ell_0+\ln\tfrac{c_\beta}{c_\alpha}\big)<1$;\\
\emph{(iii) gates eventually harm:} & $b_\alpha<b_\beta$ & $\ell_k\to-\infty$, \ $r_k\to0$.
\end{tabular}
\end{center}
\end{theorem}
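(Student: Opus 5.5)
The plan is to reduce everything to the one-sided asymptotic of Theorem~\ref{thm:poly}, applied separately to $G$ and to $H$, and then read off the three regimes from the sign of the leading $\ln k$ coefficient. First, derive \eqref{eq:twosided}. Under Assumption~\ref{ass:definetti}, $\Prob(\text{all }k\text{ accept}\mid C{=}1)=\E_H[\beta^k]=m_k^{(\beta)}$ and $\Prob(\text{all }k\text{ accept}\mid C{=}0)=m_k^{(\alpha)}$. Bayes in odds form then gives $o_k=o_0\,m_k^{(\beta)}/m_k^{(\alpha)}$, exactly as in the proof of Proposition~\ref{prop:exact}.

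Second, apply the moment asymptotic. Theorem~\ref{thm:poly} as stated only uses the hypotheses ``no atom at $1$'' and ``density $\sim c(1-x)^{b-1}$ near $1$'' on a distribution on $[0,1]$; the prior $p_0$ enters only through $\kappa$. Its first conclusion, $m_k\sim c\,\Gamma(b)\,k^{-b}$, is therefore a statement about the distribution alone and applies verbatim to $H$. This gives
\[
m_k^{(\alpha)}=c_\alpha\Gamma(b_\alpha)k^{-b_\alpha}(1+o(1)),\qquad m_k^{(\beta)}=c_\beta\Gamma(b_\beta)k^{-b_\beta}(1+o(1)).
\]
Taking logarithms turns the relative errors into additive errors, $\ln(1+o(1))=o(1)$. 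Substituting into \eqref{eq:twosided} yields the displayed expansion for $\ell_k$.

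For completeness I would recall the mechanism behind the moment asymptotic, since the whole theorem rests on it. Substitute $\alpha=1-t/k$ and split the integral at a fixed $1-\varepsilon$. The mass on $[0,1-\varepsilon]$ contributes at most $(1-\varepsilon)^k$, which is exponentially small and hence $o(k^{-b})$. On $[1-\varepsilon,1]$, sandwich the density between $(c\pm\eta)(1-\alpha)^{b-1}$ and compare with $\int_0^{\varepsilon}(1-u)^k u^{b-1}\,du\sim\Gamma(b)k^{-b}$, using the Beta function ratio $\Gamma(k+1)\Gamma(b)/\Gamma(k+b+1)$. I expect this Abelian/Tauberian step to be the only real analytic obstacle, but it is already discharged by Theorem~\ref{thm:poly}'s appendix proof. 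The main care point is that the $o(1)$ in the expansion is genuinely additive after taking logs, which holds because both constants $c_\alpha\Gamma(b_\alpha)$ and $c_\beta\Gamma(b_\beta)$ are strictly positive and finite.

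Finally, read off the regimes from the coefficient $b_\alpha-b_\beta$ of $\ln k$.

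\emph{Case $b_\alpha>b_\beta$.} Here $\ell_k\to+\infty$. Since $1-\sigma(\ell)\sim e^{-\ell}$ as $\ell\to\infty$, we get $1-r_k\sim e^{-\ell_0}\frac{c_\alpha\Gamma(b_\alpha)}{c_\beta\Gamma(b_\beta)}k^{-(b_\alpha-b_\beta)}$, which gives the $\asymp$ claim.

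\emph{Case $b_\alpha=b_\beta$.} The Gamma factors cancel, so $\ell_k\to\ell_0+\ln(c_\beta/c_\alpha)$. Continuity of $\sigma$ gives the limit, which is strictly less than $1$ because the limit of $\ell_k$ is finite.

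\emph{Case $b_\alpha<b_\beta$.} Here $\ell_k\to-\infty$, so $r_k=\sigma(\ell_k)\to0$.

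The three regimes are mutually exclusive and exhaustive, because they are determined by the sign of a single real number.
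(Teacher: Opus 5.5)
Your proposal is correct and follows essentially the same route as the paper: apply the one-sided moment asymptotic $m_k\sim c\,\Gamma(b)\,k^{-b}$ of Theorem~\ref{thm:poly} separately to $G$ and $H$, substitute into \eqref{eq:twosided}, and read the regimes off the sign of $b_\alpha-b_\beta$, with $1-r_k\sim\frac{1-p_0}{p_0}\,m_k^{(\alpha)}/m_k^{(\beta)}$ in regime (i). Your $\varepsilon$-splitting sandwich argument for the moment asymptotic is a slightly more rigorous version of the paper's Watson's-lemma sketch rather than a different approach.
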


Proof in Appendix~\ref{app:trichotomy}. The criterion is a tail comparison, with a plain-language reading: \emph{whichever side runs out of near-unanimous instances first, loses.} $b_\alpha$ large means errors that ``almost always fool the verifier'' are rare (good); $b_\beta$ large means correct answers that ``almost always pass'' are rare --- the verifier keeps finding reasons to reject good answers --- and then deep cascades kill the correct population faster than the erroneous one.

\begin{corollary}[The independence threshold is not the right criterion under correlation]\label{cor:threshold}
Under conditional independence, gating helps iff the mean likelihood ratio exceeds one ($\Lambda^\star=1$ dichotomy, \citealp[Thm.~5.2]{aksu2026oddslaw}). Under correlation, $\bar\Lambda=\bar\beta/\bar\alpha>1$ guarantees only that the \emph{first} gate helps ($\delta_1=\ln\bar\Lambda>0$); the eventual direction is decided by the tail exponents $b_\alpha$ vs.\ $b_\beta$, which are logically independent of $\bar\Lambda$. Table~\ref{tab:twosided} exhibits $\bar\Lambda=2.2$ with $r_k\to0$.
\end{corollary}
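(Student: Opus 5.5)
The proof has three parts: the first-gate computation, the claim that the tail exponents are free of $\bar\Lambda$, and an explicit counterexample.

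\textbf{First gate.} Setting $k=1$ in \eqref{eq:twosided} gives
\[
\delta_1:=\ell_1-\ell_0=\ln m_1^{(\beta)}-\ln m_1^{(\alpha)}=\ln(\bar\beta/\bar\alpha)=\ln\bar\Lambda,
\]
so $\bar\Lambda>1$ is equivalent to $r_1>r_0$. This exactly parallels Corollary~\ref{cor:oddslaw}: at $k=1$ the correlated and independent models agree, because only first moments enter. For $k\ge2$, Theorem~\ref{thm:concave} applied to $H$ shows that $\ln m_k^{(\beta)}$ is convex as well. Hence $\ell_k-\ell_0$ is a difference of a convex and a concave function, and no sign conclusion carries over from the first increment to later ones. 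The word ``only'' in the statement therefore has content; it is made rigorous by the counterexample below.

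\textbf{Logical independence.} The second step shows that $(\bar\Lambda,b_\alpha,b_\beta)$ can be prescribed essentially freely. I would use the Beta family, $G=\mathrm{Beta}(a_\alpha,b_\alpha)$ and $H=\mathrm{Beta}(a_\beta,b_\beta)$, whose upper-tail exponents are exactly $b_\alpha,b_\beta$ with $c\,\Gamma(b)=\Gamma(a+b)/\Gamma(a)$ by Theorem~\ref{thm:poly}. The means are $\bar\alpha=a_\alpha/(a_\alpha+b_\alpha)$ and $\bar\beta=a_\beta/(a_\beta+b_\beta)$. For any fixed $b_\alpha,b_\beta>0$, the maps $a\mapsto a/(a+b)$ sweep $(0,1)$ continuously. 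So for any target $\bar\Lambda>0$ we can choose $a_\alpha$ small and $a_\beta$ so that $\bar\beta/\bar\alpha=\bar\Lambda$ while keeping $\bar\beta<1$. In particular we can arrange $\bar\Lambda>1$ together with $b_\alpha<b_\beta$.

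\textbf{Counterexample and the harmful regime.} Theorem~\ref{thm:trichotomy}(iii) then gives $\ell_k\to-\infty$ and $r_k\to0$, even though $\delta_1>0$. To match the paper's claim I would instantiate with concrete numbers. For example, take $b_\alpha=1$, $b_\beta=4$, $a_\alpha=0.25$ (so $\bar\alpha=0.2$), and $a_\beta$ chosen so that $\bar\beta=0.44$, giving $\bar\Lambda=2.2$; that is, $a_\beta=0.44\cdot4/0.56\approx3.14$. One could alternatively defer to whatever parameters Table~\ref{tab:twosided} lists. Using the exact Beta moment products $m_k=\prod_{j<k}(a+j)/(a+b+j)$ from Theorem~\ref{thm:poly}, I would also show that $\ell_k$ rises, peaks near $k^\dagger$, and then falls. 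Matching the two asymptotic expansions $\ln m_k\approx \ln\frac{\Gamma(a+b)}{\Gamma(a)}-b\ln(k+\text{shift})$ should produce the stated closed form for $k^\dagger$, but only the qualitative rise-then-fall is needed for the corollary.

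The main obstacle is not mathematical depth but precision about what the independence criterion refers to. I would state explicitly that under $G=\delta_{\bar\alpha}$, $H=\delta_{\bar\beta}$ the increments are all equal to $\ln\bar\Lambda$, recovering the dichotomy of \citet[Thm.~5.2]{aksu2026oddslaw}. Under non-degenerate latents, the criterion $\bar\Lambda$ controls only $\delta_1$, while the limit behaviour is read off Theorem~\ref{thm:trichotomy}.
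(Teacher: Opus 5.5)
Your proposal is correct and follows essentially the paper's route: $\delta_1=\ln\bar\Lambda$ from \eqref{eq:twosided} at $k=1$, the eventual direction from Theorem~\ref{thm:trichotomy}, and a Beta pair in regime (iii) as the counterexample. Your sweep of the Beta family additionally gives an explicit justification of ``logically independent'', which the paper only asserts.

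Two small points. First, $\ln m_k^{(\beta)}-\ln m_k^{(\alpha)}$ is a \emph{sum} of a convex and a concave function, not a difference. Second, $k^\dagger$ comes exactly from the tilted-mean increments of Proposition~\ref{prop:kdagger}, not from matching asymptotics; for your parameters this gives $k^\dagger\approx0.71$, so reliability already drops at the second gate, which illustrates ``only'' even more sharply than Table~\ref{tab:twosided}.
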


\begin{proposition}[Closed-form crossover for Beta tails]\label{prop:kdagger}
For $\alpha\sim\mathrm{Beta}(a_\alpha,b_\alpha)$, $\beta\sim\mathrm{Beta}(a_\beta,b_\beta)$, the net evidence of gate $j{+}1$ is
\[
\delta_{j+1}=\ln\frac{\E^{H}_{(j)}[\beta]}{\E^{G}_{(j)}[\alpha]}
=\ln\frac{(a_\beta+j)/(a_\beta+b_\beta+j)}{(a_\alpha+j)/(a_\alpha+b_\alpha+j)} ,
\]
i.e., gate $j{+}1$ helps iff, \emph{among survivors of the first $j$ gates}, correct answers are still accepted more often than surviving errors. The sign of $\delta_{j+1}$ changes at most once in $j$, at
\[
k^\dagger=\frac{a_\alpha b_\beta-a_\beta b_\alpha}{b_\alpha-b_\beta} ,
\]
so in regime (iii) with $\delta_1>0$ the reliability is unimodal in $k$ with discrete optimum $k_{\mathrm{opt}}=\lceil k^\dagger\rceil$.
\end{proposition}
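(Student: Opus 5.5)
The plan is to rewrite the two-sided posterior \eqref{eq:twosided} as a telescoping sum of per-gate increments, exactly as in Theorem~\ref{thm:concave}, and then use the Beta moment formula of Theorem~\ref{thm:poly} to reduce the sign of each increment to the sign of a \emph{linear} function of $j$.

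\textbf{Increments and tilted means.} From \eqref{eq:twosided}, the net evidence of gate $j{+}1$ is
\[
\delta_{j+1}=\ell_{j+1}-\ell_j=\ln\frac{m^{(\beta)}_{j+1}}{m^{(\beta)}_j}-\ln\frac{m^{(\alpha)}_{j+1}}{m^{(\alpha)}_j}.
\]
As in Theorem~\ref{thm:concave}, each ratio $m_{j+1}/m_j$ is the mean under the $x^j$-tilted law $dG_{(j)}\propto\alpha^j\,dG$, and analogously $dH_{(j)}\propto\beta^j\,dH$. This gives the first displayed identity together with its survivor interpretation: the tilted laws are the populations of errors and of correct answers that survived the first $j$ gates. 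For Beta laws, the exact product formula of Theorem~\ref{thm:poly} telescopes to
\[
\frac{m_{j+1}}{m_j}=\frac{a+j}{a+b+j},
\]
which yields the closed form for $\delta_{j+1}$.

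\textbf{Sign analysis.} All four factors are positive, so $\delta_{j+1}>0$ if and only if
\[
(a_\beta+j)(a_\alpha+b_\alpha+j)>(a_\alpha+j)(a_\beta+b_\beta+j).
\]
Expanding both sides, the $j^2$ terms and the $a_\alpha a_\beta$ terms cancel. The difference is therefore the affine function
\[
D(j)=(b_\alpha-b_\beta)\,j+(a_\beta b_\alpha-a_\alpha b_\beta).
\]
An affine function changes sign at most once. If $b_\alpha\neq b_\beta$, its root is $j=k^\dagger=\frac{a_\alpha b_\beta-a_\beta b_\alpha}{b_\alpha-b_\beta}$. If $b_\alpha=b_\beta$, then $D$ is constant and the sign never changes.

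\textbf{Unimodality in regime (iii).} Here $b_\alpha<b_\beta$, so $D$ is strictly decreasing in $j$. The hypothesis $\delta_1>0$ means $D(0)=a_\beta b_\alpha-a_\alpha b_\beta>0$, and dividing by the negative slope shows $k^\dagger>0$. Hence $\delta_{k+1}>0$ for $k<k^\dagger$ and $\delta_{k+1}<0$ for $k>k^\dagger$, with $\delta_{k+1}=0$ exactly when $k=k^\dagger$ is an integer. So $\ell_k$ increases strictly from $k$ to $k{+}1$ as long as $k<k^\dagger$, and is nonincreasing afterwards. The maximum is therefore attained at the smallest integer $k\ge k^\dagger$, namely $\lceil k^\dagger\rceil$. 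When $k^\dagger$ is an integer there is a tie between $k^\dagger$ and $k^\dagger+1$, and $\lceil k^\dagger\rceil$ is still a maximizer. Because $r_k=\sigma(\ell_k)$ and $\sigma$ is strictly increasing, $r_k$ inherits the same unimodal shape and the same argmax. The decay to zero after the peak is already supplied by Theorem~\ref{thm:trichotomy}(iii).

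\textbf{Main obstacle.} There is no real obstacle; the only delicate point is bookkeeping. One must keep straight the indexing of $\delta_{j+1}$ against the move from $k$ to $k{+}1$, so that the optimum comes out as $\lceil k^\dagger\rceil$ rather than $\lceil k^\dagger\rceil+1$. One must also handle the integer tie case correctly.
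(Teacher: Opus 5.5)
Your proposal is correct and follows the paper's proof essentially step for step: increments as ratios of tilted means, the Beta ratio $m_{j+1}/m_j=(a+j)/(a+b+j)$ (you telescope the product formula, the paper notes that the $x^j$-tilt of $\mathrm{Beta}(a,b)$ is $\mathrm{Beta}(a+j,b)$), cross-multiplication to an affine function of $j$ with root $k^\dagger$, and the resulting unimodality. Your additional bookkeeping is accurate and fills in details the paper leaves implicit: positivity of $k^\dagger$ from $\delta_1>0$, the degenerate case $b_\alpha=b_\beta$, and the tie when $k^\dagger$ is an integer.
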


Proof in Appendix~\ref{app:kdagger}. Note $k^\dagger$ exists at \emph{zero} gate cost: this internal optimum is driven purely by the two selection effects, and is distinct from the cost-driven $k^*$ of Corollary~\ref{cor:kstar}.

\paragraph{Numerical example.}
Take a verifier that is decent on both sides \emph{on average}: $\bar\alpha=0.3$ ($\alpha\sim\mathrm{Beta}(0.7,1.63)$, as in Table~\ref{tab:overestimate}) and $\bar\beta=0.67$ ($\beta\sim\mathrm{Beta}(8,4)$). Mean gate quality is healthy: $\bar\Lambda=2.2$, first-gate evidence $\delta_1=0.80>0$. But $b_\alpha=1.63<b_\beta=4$: near-unanimously-accepted correct answers are scarcer than near-undetectable errors, so this is regime (iii), with $k^\dagger=\frac{0.7\cdot4-8\cdot1.63}{1.63-4}\approx4.3$. Table~\ref{tab:twosided}: reliability peaks at $k=5$ ($78.7\%$), then declines --- back to $62\%$ by $k=20$ and heading to zero as $\ell_k\sim-2.37\ln k$ --- while the independence extrapolation reports five nines and rising. Neither the Odds Law nor the one-sided model can represent this reversal; it is a joint effect of the two selection pressures. The reversal is not merely theoretical: without external feedback, LLM self-correction can \emph{lower} reasoning accuracy rather than raise it \citep{huang2024selfcorrect} --- the empirical signature of a same-family gate that harms.

\begin{table}[t]
\centering
\caption{Two-sided cascade: unimodal truth vs.\ monotone independence prediction ($p_0=0.5$, $\bar\alpha=0.3$, $\bar\beta=0.67$; $\alpha\sim\mathrm{Beta}(0.7,1.63)$, $\beta\sim\mathrm{Beta}(8,4)$; $k^\dagger\approx4.3$).}
\label{tab:twosided}
\begin{tabular}{r cc l}
\toprule
$k$ & $r_k$ (indep.) & $r_k$ (true) & \\
\midrule
1  & 0.690 & 0.690 & \\
2  & 0.832 & 0.751 & \\
3  & 0.917 & 0.776 & \\
5  & 0.982 & \textbf{0.787} & peak ($\approx k^\dagger$)\\
8  & 0.998 & 0.771 & declining\\
10 & 0.99966 & 0.751 & \\
15 & 0.999994 & 0.691 & \\
20 & $\approx1$ & 0.623 & still declining\\
\bottomrule
\end{tabular}
\end{table}

\begin{remark}[A spectrum of earlier models]
$\beta\equiv1$ is $b_\beta=0$: regime (i), gates monotonically help (\S\ref{sec:onesided}). A \emph{constant} $\beta_0<1$ is an infinitely thin tail ($m_k^{(\beta)}=\beta_0^k$, ``$b_\beta=\infty$''): always regime (iii), with the harsher linear decay $\ell_k\sim k\ln\beta_0$. The two-sided Beta model interpolates between these extremes and shows the boundary is a tail comparison, not a side condition. With atoms on both sides ($1-\pi_\alpha$ at $\alpha{=}1$, $1-\pi_\beta$ at $\beta{=}1$) the ceiling generalizes to $r_\infty=\frac{p_0(1-\pi_\beta)}{p_0(1-\pi_\beta)+(1-p_0)(1-\pi_\alpha)}$: what survives at depth is the ratio of the two blind-spot masses.
\end{remark}

\section{Measuring $\rho_v$: an inversion protocol}\label{sec:inversion}

Everything above is a functional of $G$ (and $H$); none of it is hypothetical, because $G$ is estimable from accept/reject logs alone. Prior correlation-aware analyses \citep{aksu2026oddslaw,aksu2026maestro} treat the correlation as given and validate by Monte Carlo; to our knowledge no one has \emph{measured} a verifier-cascade correlation on a real generator--verifier pair. The forward model is standard empirical Bayes \citep{robbins1956}:

\begin{enumerate}\itemsep1pt
\item On a calibration set with ground truth, collect \textbf{the generator's own erroneous outputs} (\S\ref{sec:setup}; using third-party errors measures the wrong $G$).
\item For each erroneous instance $i$, sample the verifier $R$ times (temperature $>0$); record accept counts
\[
X_i\mid\alpha_i\sim\mathrm{Bin}(R,\alpha_i),\qquad \alpha_i\sim G .
\]
\item Recover $G$ (binomial deconvolution), or directly its low-order functionals.
\end{enumerate}

\begin{proposition}[Moment identification; two verdicts suffice for $\rho_v$]\label{prop:ident}
For $k\le R$, $\;\widehat{m_k}=\frac{1}{N}\sum_i\binom{X_i}{k}\big/\binom{R}{k}$ is unbiased for $m_k$; hence $\{X_i\}$ identifies $m_1,\dots,m_R$, and
\[
\widehat{\rho_v}=\frac{\widehat{m_2}-\widehat{m_1}^2}{\widehat{m_1}(1-\widehat{m_1})}
\]
is consistent already at $R=2$.
\end{proposition}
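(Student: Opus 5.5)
The plan is to compute the conditional expectation of the U-statistic kernel $\binom{X_i}{k}/\binom{R}{k}$ given $\alpha_i$, show that it equals $\alpha_i^k$, and then take the outer expectation over $G$. The standard route is the falling-factorial moment of the binomial. If $X\mid\alpha\sim\mathrm{Bin}(R,\alpha)$, then for every integer $k\le R$
\[
\E\!\left[\binom{X}{k}\,\middle|\,\alpha\right]=\binom{R}{k}\alpha^k .
\]
I would show this combinatorially. Write $X=\sum_{r=1}^R \ind_r$, where the $\ind_r$ are the conditionally i.i.d.\ Bernoulli$(\alpha)$ verdicts of Assumption~\ref{ass:definetti}. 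Then $\binom{X}{k}$ counts the $k$-subsets of the $R$ calls on which every verdict is an accept. By linearity, its conditional expectation is $\binom{R}{k}$ times the probability that a fixed $k$-subset is all-accept, and that probability is $\alpha^k$.

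Dividing by $\binom{R}{k}$ and averaging over $\alpha_i\sim G$ by the tower property gives $\E[\binom{X_i}{k}/\binom{R}{k}]=m_k$. Averaging over the $N$ instances then shows $\widehat{m_k}$ is unbiased. This argument uses exactly the all-accept event of Proposition~\ref{prop:exact}, so it mirrors the cascade computation: each $k$-subset of the $R$ calls acts as a simulated depth-$k$ cascade.

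For identification, the law of $X_i$ is a mixture of binomials, and its probability vector on $\{0,\dots,R\}$ is in bijection with the factorial moments $\binom{R}{k}m_k$, $k=0,\dots,R$. The bijection is the triangular relation $\E\binom{X}{k}=\sum_x \binom{x}{k}\Prob(X=x)$, which is invertible by binomial inversion. So the distribution of $X_i$ determines $m_1,\dots,m_R$ and conversely. I would also note that the same relation shows the data determine nothing of $G$ beyond these first $R$ moments; this is worth stating because of the later remark on the ill-posedness of the ceiling.

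For consistency at $R=2$, I would assume the instances are i.i.d.\ draws. The estimators $\widehat{m_1}$ and $\widehat{m_2}$ are then sample means of bounded variables, so the strong law of large numbers gives $\widehat{m_1}\to m_1$ and $\widehat{m_2}\to m_2$ almost surely. The map $(x,y)\mapsto (y-x^2)/(x(1-x))$ is continuous wherever $x\in(0,1)$. The continuous mapping theorem then yields $\widehat{\rho_v}\to \Var(\alpha)/(\bar\alpha(1-\bar\alpha))=\rho_v$, which is the formula of Proposition~\ref{prop:rho}. With $R=2$ the estimators are explicit: $\widehat{m_1}=\overline{X}/2$ and $\widehat{m_2}=\frac1N\sum_i\ind[X_i=2]$.

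The only step needing care is the degenerate boundary. Consistency requires $\bar\alpha\in(0,1)$ so that the denominator stays away from zero in the limit. I would state this as a standing nondegeneracy condition, adding that $\widehat{\rho_v}$ is left undefined (or set by convention) on the event $\widehat{m_1}\in\{0,1\}$, whose probability vanishes as $N\to\infty$. I would also remark that $\widehat{\rho_v}$ itself is only consistent, not unbiased, because it is a nonlinear function of unbiased estimators. This is exactly why the statement separates unbiasedness of $\widehat{m_k}$ from consistency of $\widehat{\rho_v}$.
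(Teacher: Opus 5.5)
Your proposal is correct and follows the paper's proof. Both go through the binomial factorial-moment identity $\E[\binom{X}{k}\mid\alpha]=\binom{R}{k}\alpha^k$, integrate over $G$, and invoke Proposition~\ref{prop:rho}; your combinatorial proof of the identity, the binomial-inversion argument for identification, and the SLLN/continuous-mapping step with the nondegeneracy condition $\bar\alpha\in(0,1)$ fill in details the paper leaves implicit.
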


\begin{proof}
$\E\big[\binom{X}{k}\mid\alpha\big]=\binom{R}{k}\alpha^k$ (binomial factorial moments); integrate over $G$ and apply Proposition~\ref{prop:rho}.
\end{proof}

Two further estimators of increasing resolution: \textbf{(M2)} beta-binomial maximum likelihood for $(\hat a,\hat b)$, giving the full predicted curve $r_k$ and ceiling; \textbf{(M3)} nonparametric MLE over mixing distributions \citep{kieferwolfowitz1956,efron2016}, which does not assume Beta and is the only one able to expose an atom at $\alpha=1$ (true blind spots) or multimodality.

\paragraph{Ill-posedness of the ceiling.}
The inversion is a textbook ill-posed inverse problem, and honesty about resolution is part of the protocol:
\textbf{(P1)} the ceiling is an upper-tail functional, and $R$ verdicts cannot distinguish $\alpha=1$ from $\alpha=1-\epsilon$ below boundary resolution $\epsilon\sim1/R$: two worlds with ceilings $0.91$ and $1.00$ produce nearly identical data at small $R$ (Fig.~\ref{fig:illposed}), so ceiling estimates carry an $R$-dependent identifiability floor and require regularization --- the exact structure (resolution kernels, damped inversion) long formalized for gross Earth data \citep{backusgilbert1968};
\textbf{(P2)} $R$ verdicts identify only the first $R$ moments: cheap protocols pin $\rho_v$ but not the deep-$k$ behavior;
\textbf{(P3)} with a labeling budget $B=NR$ there is an accuracy trade between instances and depth; low-order functionals favor large $N$, tail functionals demand large $R$.

\paragraph{Falsification loop.}
The theory earns its keep by out-of-sample prediction: fit $G$ at low order (small $R$), \emph{extrapolate} the entire curve $r_k$ to held-out gate depths, and compare. Exponential ($\rho_v=0$) and polynomial ($\rho_v>0$) predictions separate fast (Table~\ref{tab:overestimate}), so modest data decide. A practical decision rule falls out: measure $\widehat{\rho_v}$ with $R=2$; if small, gates are cheap reliability (independence regime); if large, stop buying gates and spend on decorrelation --- a different model family or modality, external oracles, tool-based verification. Even trivial perturbations that break the shared-blind-spot channel are known to help disproportionately \citep{tsui2025selfcorrection}.

\paragraph{Decorrelation vs.\ the exchangeability assumption.}
One tension deserves to be stated head-on rather than left to the caveats. The lever the theory recommends --- decorrelate the verifier from the generator by changing model family, modality, or evidence source --- deliberately makes the gates \emph{heterogeneous}, whereas Assumption~\ref{ass:definetti} treats them as exchangeable under a single scalar $\alpha$. The two are reconciled by reading ``$k$ gates'' at the right granularity. \textbf{(A)}~When the $k$ gates are repeated draws of one verifier, or members of one blind-spot-sharing family, the scalar model is exact and the message is the pessimistic one: the extra gates inherit the same tail, so reliability saturates at the ceiling. \textbf{(B)}~When the gates come from genuinely different families they are no longer exchangeable; the faithful object is a \emph{vector} latent $\boldsymbol{\alpha}=(\alpha_1,\dots,\alpha_k)$ (or a hierarchical $G$), and the present scalar theory is its first-order projection. That projection already points the right way: within the scalar model, splicing in a less-correlated gate is exactly what \emph{thins} the effective upper tail of $G$ --- raising the exponent $b$, shrinking the blind-spot mass $1-\pi$, and lifting the ceiling $-\ln(1-\pi)$. Decorrelation is thus not outside the theory's logic but the operation that moves $G$ in the one direction the scalar theory says matters; the faithful heterogeneous treatment --- e.g.\ a rank-one shared-plus-family latent $\alpha_i=\sigma(u+v_i)$, under which \emph{partial} decorrelation registers as a measurable rise in $b$ --- is the natural sequel (\S\ref{sec:limits}).

\section{Synthetic-recovery validation}\label{sec:experiments}

Before touching real logs, we validate that the pipeline recovers known ground truth --- the synthetic-recovery discipline standard in geophysical inversion. All experiments: $N=4000$ instances, fixed seeds; code to reproduce all tables and figures is available at \url{https://github.com/jianganghan/harness-verifier-cascades}.

\begin{description}\itemsep2pt
\item[A (full-spectrum recovery).] Data generated with $\rho_v\in\{0.05,\dots,0.50\}$, $\bar\alpha=0.3$, $R=10$: the moment estimator recovers $0.050/0.291/0.502$ at true $0.05/0.30/0.50$; M2 agrees.
\item[B (two verdicts suffice).] True $\rho_v=0.30$: $R=2$ yields $\widehat{\rho_v}=0.274$; $R\ge3$ is essentially exact --- Proposition~\ref{prop:ident} in action.
\item[C (ceiling ill-posedness).] Two worlds with $10\%$ atom mass at $\alpha=1.00$ vs.\ at $\alpha=0.97$ (true ceilings $r_\infty=0.91$ vs.\ $1.00$): at $R=5$ their accept-count histograms are nearly indistinguishable (Fig.~\ref{fig:illposed}, left); at $R=50$ the atom emerges and NPMLE assigns it mass $0.100$ vs.\ $0.000$ (right). $\rho_v$ is cheap; the ceiling is expensive --- exactly (P1)/(P2).
\item[D (falsification loop).] Fit beta-binomial on accept counts of order $R=8$ only ($\widehat{\rho_v}=0.30$), extrapolate $r_k$ to $k\le25$: at $k=5$ the correlated theory predicts $0.954$ against truth $0.953$, while the independence extrapolation from the same first-gate data gives $0.998$ (Fig.~\ref{fig:falsification}).
\end{description}

\begin{figure}[t]
\centering
\includegraphics[width=.95\linewidth]{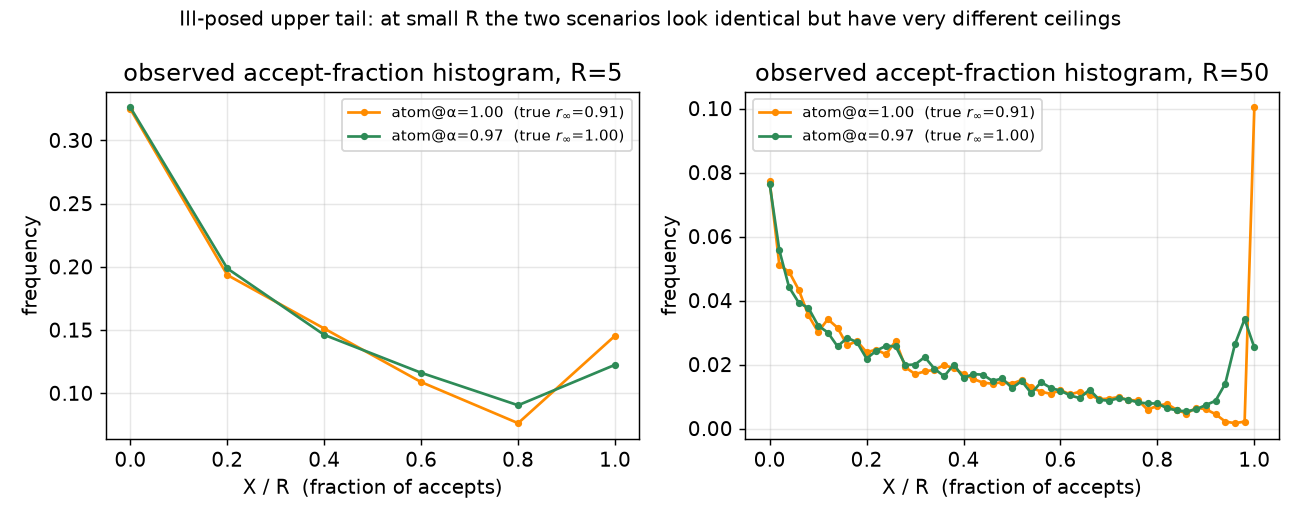}
\caption{Ill-posed upper tail (Experiment C). Two worlds --- $10\%$ blind-spot atom at $\alpha=1.00$ (ceiling $0.91$) vs.\ atom at $\alpha=0.97$ (ceiling $1.00$) --- are observationally near-identical at $R=5$ (left) and separate only at $R=50$ (right), where the $\alpha=1$ spike appears at $X/R=1$. The up-turn near $X/R{=}1$ is not noise but the signature of this ceiling atom: the $10\%$ of near-perfect items accept on (almost) every one of the $R$ gates, piling up at the right edge --- deterministically at $X{=}R$ for $\alpha{=}1$ (sharp orange spike), and as a $\mathrm{Binomial}(R,0.97)$ cluster just below the edge for $\alpha{=}0.97$ (broader green shoulder). The ceiling is a tail property with an $R$-dependent identifiability floor.}
\label{fig:illposed}
\end{figure}

The immediate next step is the same estimators on \emph{real} accept/reject logs: tasks with programmatic ground truth (unit-tested code, exact-match extraction, numerically checkable math), the generator's own wrong answers as the instance population, $R$ verifier samples per instance --- the protocol and code require only the data source to change.

\section{Related work}\label{sec:related}

\paragraph{Reliability algebra for harnesses.}
The direct target is the Odds Law / Maestro Order pair \citep{aksu2026oddslaw,aksu2026maestro}: four primitives with composition laws, the $\Lambda^\star=1$ threshold dichotomy, a water-filling controller, and --- on the \emph{voting} side --- a latent-factor correlation theory with an effective-sample floor (Thm.~7.2 there). We modify exactly one assumption (conditional independence of verification gates) and answer the open problem stated there; our Corollary~\ref{cor:oddslaw} recovers their Lemma~4.3/Thm.~5.1 as the $\rho_v\to0$ boundary. The von Neumann lineage \citep{vonneumann1956} is shared.

\paragraph{Correlated voting and ensembling.}
Exchangeable-vote Condorcet theory dates to \citet{ladha1993}; recent LLM-side treatments include non-monotone vote-scaling from difficulty heterogeneity \citep{chen2024morecalls}, general de Finetti characterizations of when voting helps or hurts \citep{liu2026voting}, and two-call moment identification of vote correlation \citep{liu2026twocalls} --- the voting-side analogue of our Proposition~\ref{prop:ident}. At the system level, hierarchical majority trees with a shared-error correlation exhibit a Kesten--Stigum-style amplification--collapse phase transition \citep{liu2026synergy}, and for answer-\emph{selection} policies over a model pool (routing, voting, model cascades), \citet{chen2026cofailure} prove an accuracy ceiling of $1-\beta$ --- the pool's simultaneous-failure rate --- together with a negative identification result: pairwise error correlation cannot determine $\beta$. Both concern correlated \emph{generators}; our ceiling concerns correlated \emph{checking} --- the blind-spot atom of a generator--verifier family under conjunctive gating. The non-identifiability of \citet{chen2026cofailure} does not apply to Proposition~\ref{prop:ident}, whose observable is repeated verdicts on the \emph{same} instance rather than cross-model pairwise statistics; it is, however, consonant with our diagnosis that the tail quantity setting the ceiling is exactly the ill-posed direction of the inversion (\S\ref{sec:inversion}). Verification is not voting: conjunctive gating induces the survivorship tilt of Remark~\ref{rem:survivor}, which has no one-shot-voting analogue, and yields phenomena absent there (per-gate evidence decay, the two-sided trichotomy, an internal $k^\dagger$ at zero cost).

\paragraph{Self-verification blind spots.}
That models systematically miss their own errors is documented empirically \citep{tsui2025selfcorrection} and argued information-theoretically via a shared latent failure cause capping what self-evaluation can add \citep{limits2026}. We contribute the gating-algebra form of this idea: explicit finite-$k$ posteriors, the concavity/polynomial/ceiling structure, closed forms in a one-parameter family --- and a measurement protocol, which the information-theoretic treatment explicitly lacks.

\paragraph{Verifier/judge scaling and the exponential--polynomial dichotomy.}
\citet{halder2026judge} obtain a finite optimal amount of judge-guided sampling when the reward model is misspecified; our regime (iii) is a distinct mechanism (correlation, not bias) for the same phenomenology, and the two are separable in data because $\rho_v$ is directly measurable. That test-time reliability can decay \emph{polynomially rather than exponentially} is by now a recurring finding: a knockout tournament's failure probability decays exponentially \emph{or} as a power law depending on how one scales \citep{chen2024provable}, best-of-$k$ under a misspecified reward yields polynomially diminishing returns \citep{halder2026judge}, and attack-success curves show an explicit polynomial--exponential crossover set by the underlying generative mechanism \citep{halder2026jailbreak}. We therefore do not claim the dichotomy itself as new; our contribution is to fix it to a specific mechanism --- conjunctive survivorship against a blind-spot tail --- for which the failure exponent is the closed form $k^{-b}$ in $G$'s upper-tail index $b$, controlled by the single measurable parameter $\rho_v$. Empirical verification studies independently report that error-detectability falls with generator strength and that verifier scaling alone cannot overcome these correlation-driven limits \citep{zhou2025variation}.

\section{Limitations and outlook}\label{sec:limits}

The theory is deliberately minimal. (1) Assumption~\ref{ass:definetti} compresses all inter-gate structure into a scalar exchangeable latent --- a mean-field idealization; gate-specific systematic differences (heterogeneous verifier families) call for a vector latent. (2) All-accept semantics: under generate--verify--retry, false rejections cost throughput rather than precision, which restores the one-sided picture for precision while making \S\ref{sec:twosided} the right model when regeneration is impossible or gates are terminal. (3) Beta is a convenience for closed forms; the asymptotics need only regularly-varying tails, and M3 removes the parametric assumption in estimation. (4) The validation here is synthetic recovery; the measurement on real generator--verifier pairs --- the quantitative bridge both \citet{aksu2026oddslaw} and we regard as the natural sequel --- is in progress, and the protocol of \S\ref{sec:inversion} is designed so that only the data source changes.

\appendix

\section{Deferred proofs}

\subsection{Theorem~\ref{thm:poly} (polynomial decay)}\label{app:poly}
\begin{proof}
Substitute $\alpha=1-s$: $m_k=\int_0^1(1-s)^k g(1-s)\,ds$. The integrand is dominated by $s=O(1/k)$; with $g(1-s)\sim c\,s^{b-1}$ and $(1-s)^k=e^{k\ln(1-s)}\sim e^{-ks}$ on that scale, Watson's lemma gives $m_k\sim c\int_0^\infty e^{-ks}s^{b-1}ds=c\,\Gamma(b)\,k^{-b}$. For $G=\mathrm{Beta}(a,b)$, $m_k=\frac{B(a+k,b)}{B(a,b)}=\frac{\Gamma(a+b)}{\Gamma(a)}\cdot\frac{\Gamma(a+k)}{\Gamma(a+b+k)}\sim\frac{\Gamma(a+b)}{\Gamma(a)}k^{-b}$ by Stirling. Finally $1-r_k=\frac{(1-p_0)m_k}{p_0+(1-p_0)m_k}\sim\frac{1-p_0}{p_0}m_k$.
\end{proof}

\subsection{Corollary~\ref{cor:kstar} (cost-optimal gates)}\label{app:kstar}
\begin{proof}
With $1-r_k\approx\kappa k^{-b}$, marginal value $U\frac{d r_k}{dk}\approx U\kappa b\,k^{-(b+1)}$; setting it equal to $c$ gives $k^{*}=(U\kappa b/c)^{1/(b+1)}$. Under independence $1-r_k\approx\frac{1-p_0}{p_0}\bar\alpha^k$, and equating $U$ times its derivative to $c$ gives $k^{*}=\Theta(\ln(U/c)/\ln(1/\bar\alpha))$.
\end{proof}

\subsection{Theorem~\ref{thm:trichotomy} (trichotomy)}\label{app:trichotomy}
\begin{proof}
Apply Appendix~\ref{app:poly} to both moment sequences: $m_k^{(\alpha)}\sim c_\alpha\Gamma(b_\alpha)k^{-b_\alpha}$, $m_k^{(\beta)}\sim c_\beta\Gamma(b_\beta)k^{-b_\beta}$. Then \eqref{eq:twosided} gives $\ell_k=\ell_0+\ln\frac{c_\beta\Gamma(b_\beta)}{c_\alpha\Gamma(b_\alpha)}+(b_\alpha-b_\beta)\ln k+o(1)$. The three regimes read off the sign of $b_\alpha-b_\beta$; in regime (i), $1-r_k\sim\frac{1-p_0}{p_0}\frac{m_k^{(\alpha)}}{m_k^{(\beta)}}\asymp k^{-(b_\alpha-b_\beta)}$.
\end{proof}

\subsection{Proposition~\ref{prop:kdagger} (crossover)}\label{app:kdagger}
\begin{proof}
For Beta, the $x^j$-tilted distribution of $\mathrm{Beta}(a,b)$ is $\mathrm{Beta}(a+j,b)$ with mean $\frac{a+j}{a+b+j}$, and $\delta_{j+1}=\ln\frac{m^{(\beta)}_{j+1}/m^{(\beta)}_j}{m^{(\alpha)}_{j+1}/m^{(\alpha)}_j}$ is the stated ratio of tilted means. Setting $\frac{a_\beta+j}{a_\beta+b_\beta+j}=\frac{a_\alpha+j}{a_\alpha+b_\alpha+j}$ and cross-multiplying, the $j^2$ terms cancel, leaving the linear equation $j(b_\alpha-b_\beta)=a_\alpha b_\beta-a_\beta b_\alpha$, whence $k^\dagger$; linearity implies at most one sign change. If $b_\alpha<b_\beta$ and $\delta_1>0$, increments are positive for $j<k^\dagger$ and negative after, so $\ell_k$ (hence $r_k$) is unimodal with integer maximizer $\lceil k^\dagger\rceil$.
\end{proof}

\bibliographystyle{plainnat}
\bibliography{references}

@misc{aksu2026oddslaw,
  author = {Hidayet Aksu},
  title  = {Odds Law: The Decomposition Algebra On How Intelligence Organizes Itself to Solve Difficult Problems Reliably},
  year   = {2026},
  note   = {arXiv:2606.15712}
}

@misc{aksu2026maestro,
  author = {Hidayet Aksu},
  title  = {Maestro Order: A Model-Agnostic Orchestration Harness},
  year   = {2026},
  note   = {arXiv:2606.23983}
}

@incollection{vonneumann1956,
  author    = {John von Neumann},
  title     = {Probabilistic Logics and the Synthesis of Reliable Organisms from Unreliable Components},
  booktitle = {Automata Studies},
  series    = {Annals of Mathematics Studies},
  number    = {34},
  editor    = {Claude E. Shannon and John McCarthy},
  publisher = {Princeton University Press},
  pages     = {43--98},
  year      = {1956}
}

@article{ladha1993,
  author  = {Krishna K. Ladha},
  title   = {Condorcet's Jury Theorem in Light of de Finetti's Theorem: Majority-Rule Voting with Correlated Votes},
  journal = {Social Choice and Welfare},
  volume  = {10},
  number  = {1},
  pages   = {69--85},
  year    = {1993}
}

@misc{tsui2025selfcorrection,
  author = {Ken Tsui},
  title  = {Self-Correction Bench: Uncovering and Addressing the Self-Correction Blind Spot in Large Language Models},
  year   = {2025},
  note   = {arXiv:2507.02778}
}

@misc{limits2026,
  author = {Andrew Michael Brilliant},
  title  = {Limits of Self-Correction in LLMs: An Information-Theoretic Analysis of Correlated Errors},
  year   = {2026},
  note   = {Preprints.org 202601.0892; DOI 10.20944/preprints202601.0892.v2 (v1 2026-01-13, v2 2026-02-11); also TechRxiv}
}

@misc{halder2026judge,
  author = {Indranil Halder and Cengiz Pehlevan},
  title  = {Demystifying LLM-as-a-Judge: Analytically Tractable Model for Inference-Time Scaling},
  year   = {2025},
  note   = {arXiv:2512.19905}
}

@misc{halder2026jailbreak,
  author = {Indranil Halder and Annesya Banerjee and Cengiz Pehlevan},
  title  = {Jailbreak Scaling Laws for Large Language Models: Polynomial--Exponential Crossover},
  year   = {2026},
  note   = {arXiv:2603.11331}
}

@misc{chen2024provable,
  author = {Yanxi Chen and Xuchen Pan and Yaliang Li and Bolin Ding and Jingren Zhou},
  title  = {Provable Scaling Laws for the Test-Time Compute of Large Language Models},
  year   = {2024},
  note   = {arXiv:2411.19477; NeurIPS 2025}
}

@misc{chen2024morecalls,
  author = {Lingjiao Chen and Jared Quincy Davis and Boris Hanin and Peter Bailis and Ion Stoica and Matei Zaharia and James Zou},
  title  = {Are More LLM Calls All You Need? Towards Scaling Laws of Compound Inference Systems},
  year   = {2024},
  note   = {arXiv:2403.02419; NeurIPS 2024}
}

@misc{liu2026twocalls,
  author = {Yi Liu},
  title  = {Two Calls, Two Moments, and the Vote-Accuracy Curve of Repeated LLM Inference},
  year   = {2026},
  note   = {arXiv:2605.03379}
}

@misc{liu2026voting,
  author = {Yi Liu},
  title  = {When Can Voting Help, Hurt, or Change Course? Exact Structure of Binary Test-Time Aggregation},
  year   = {2026},
  note   = {arXiv:2605.05592}
}

@misc{chen2026cofailure,
  author = {Josef Chen},
  title  = {When Does Combining Language Models Help? A Co-Failure Ceiling on Routing, Voting, and Mixture-of-Agents Across 67 Frontier Models},
  year   = {2026},
  note   = {arXiv:2606.27288}
}

@misc{liu2026synergy,
  author = {Bang Liu and Linglong Kong and Jian Pei},
  title  = {Phase Transition for Budgeted Multi-Agent Synergy},
  year   = {2026},
  note   = {arXiv:2601.17311}
}

@misc{kohli2026ninejudges,
  author = {Guneet Kohli},
  title  = {Nine Judges, Two Effective Votes: Correlated Errors Undermine LLM Evaluation Panels},
  year   = {2026},
  note   = {arXiv:2605.29800}
}

@misc{zhou2025variation,
  author = {Yefan Zhou and Austin Xu and Yilun Zhou and Janvijay Singh and Jiang Gui and Shafiq Joty},
  title  = {Variation in Verification: Understanding Verification Dynamics in Large Language Models},
  year   = {2025},
  note   = {arXiv:2509.17995}
}

@article{kamoi2024selfcorrection,
  author  = {Ryo Kamoi and Yusen Zhang and Nan Zhang and Jiawei Han and Rui Zhang},
  title   = {When Can {LLMs} Actually Correct Their Own Mistakes? A Critical Survey of Self-Correction of {LLMs}},
  journal = {Transactions of the Association for Computational Linguistics},
  volume  = {12},
  pages   = {1417--1440},
  year    = {2024}
}

@inproceedings{huang2024selfcorrect,
  author    = {Jie Huang and Xinyun Chen and Swaroop Mishra and Huaixiu Steven Zheng and Adams Wei Yu and Xinying Song and Denny Zhou},
  title     = {Large Language Models Cannot Self-Correct Reasoning Yet},
  booktitle = {International Conference on Learning Representations (ICLR)},
  year      = {2024},
  note      = {arXiv:2310.01798}
}

@inproceedings{robbins1956,
  author    = {Herbert Robbins},
  title     = {An Empirical Bayes Approach to Statistics},
  booktitle = {Proceedings of the Third Berkeley Symposium on Mathematical Statistics and Probability},
  volume    = {1},
  pages     = {157--163},
  publisher = {University of California Press},
  year      = {1956}
}

@article{kieferwolfowitz1956,
  author  = {Jack Kiefer and Jacob Wolfowitz},
  title   = {Consistency of the Maximum Likelihood Estimator in the Presence of Infinitely Many Incidental Parameters},
  journal = {The Annals of Mathematical Statistics},
  volume  = {27},
  number  = {4},
  pages   = {887--906},
  year    = {1956}
}

@article{efron2016,
  author  = {Bradley Efron},
  title   = {Empirical Bayes Deconvolution Estimates},
  journal = {Biometrika},
  volume  = {103},
  number  = {1},
  pages   = {1--20},
  year    = {2016}
}

@article{backusgilbert1968,
  author  = {George Backus and Freeman Gilbert},
  title   = {The Resolving Power of Gross Earth Data},
  journal = {Geophysical Journal of the Royal Astronomical Society},
  volume  = {16},
  number  = {2},
  pages   = {169--205},
  year    = {1968}
}

\end{document}